\documentclass[10pt,a4paper]{article}
\usepackage{amsfonts}
\usepackage[latin9]{inputenc}
\usepackage{amsmath}
\usepackage{amssymb}
\usepackage{url}
\usepackage{graphicx}
\usepackage[pdfstartview=FitH]{hyperref}
\usepackage{amsthm}
\usepackage{authblk}
\makeatletter
\begin{document}
\newtheorem{theorem}{Theorem}[section]
\newtheorem{lemma}[theorem]{Lemma}
\newtheorem{definition}[theorem]{Definition}
\newtheorem{claim}[theorem]{Claim}
\newtheorem{example}[theorem]{Example}
\newtheorem{remark}[theorem]{Remark}
\newtheorem{proposition}[theorem]{Proposition}
\newtheorem{corollary}[theorem]{Corollary}

\title{An intermediate quasi-isometric invariant between subexponential asymptotic dimension growth and Yu's Property A}
\author{Izhar Oppenheim}
\affil{Department of Mathematics\\
 The Ohio State University  \\
 Columbus, OH 43210, USA \\
E-mail: izharo@gmail.com}

\maketitle
\textbf{Abstract}. We present the notion of asymptotically large depth for a metric space which is (a priory) weaker than having subexponential asymptotic dimension growth and (a priory) stronger than property A.    \\ \\
\textbf{Mathematics Subject Classification (2010)}. 51F99, 20F65 
\textbf{Keywords}.  Property A, Asymptotic dimension growth

\section{Introduction}

In \cite{Grom} Gromov introduced the notion of finite asymptotic dimension to study infinite groups. This notion gained more popularity when in \cite{Yu2} Yu showed that groups with finite asymptotic dimension satisfy the Novikov conjuncture. Later, in \cite{Yu}, Yu defined a weaker notion called property A (which can be viewed as a non equivariant notion of amenability) and proved that groups with property A also satisfy the Novikov conjuncture. It has been shown that finite asymptotic dimension imply property A (see for instance \cite{Will}[Theorem 1.2.4]) and that there are groups with infinite asymptotic dimension for which property A holds (see for instance \cite{Dr}[section 4] to see that for the restricted wreath product, $\mathbb{Z} \wr \mathbb{Z}$, the asymptotic dimension is infinite and property A holds). \\
 
Also, in \cite{Dr} the notion of polynomial asymptotic growth was discussed and it was shown that polynomial asymptotic growth implies property A. Later, in \cite{Oz}, it was shown that subexponential asymptotic growth implies property A. In this article, we shall introduce another notion that we name asymptotically large depth. We'll show having asymptotically large depth implies property A and is implied by than having subexponential asymptotic growth (thus providing another proof to the fact that subexponential asymptotic growth implies property A). We have no new examples of using this notion so far, but it is our hope that using this notion of asymptotically large depth will open a new way of proving property A for finitely generated groups. \\

\textbf{Structure of this paper}: In section 2, we recall basic definitions regarding property A, give a first definition of asymptotically large depth and show that it implies property A for discrete metric spaces of bounded geometry. In section 3, we prove equivalent definitions for asymptotically large depth that are maybe more intuitive. In section 4, we show that subexponential asymptotic growth implies asymptotically large depth. In section 5, we recall some definitions regarding quasi-isometry and show that asymptotically large depth is a quasi-isometric invariant.

\section{Asymptotically large depth - first definition and property A}

\subsection{Property A}

Let us start by recalling some basic definition regarding discrete metric spaces:

\begin{definition}
Let $(X,d)$ be a metric space. \\
$X$ is called discrete if there is $c>0$ such that 
$$ \forall x,y \in X, d(x,y) < c \Rightarrow x=y.$$
A discrete metric space $X$ is said to have bounded geometry if 
$$\forall M>0, \sup_{x \in X} \vert B(x,M) \vert < \infty .$$ 
\end{definition}

\begin{remark}
The motivating example of discrete metric spaces with bounded geometry is $X = \Gamma$ a finitely generated group with the word metric with respect to some finite generating set $S$. 
\end{remark}

Property A was defined in \cite{Yu} for general discrete metric spaces. We shall focus our attention on discrete metric spaces of bounded geometry and therefore we shall not state the original definition here, but only an equivalent definition for discrete metric spaces of bounded geometry (the equivalence to the original definition is proven in \cite{HR}[Lemma 3.5] or \cite{Will}[Theorem 1.2.4]).

\begin{definition}
Let $(X,d)$ be a discrete metric space with bounded geometry. $X$ has property A if there is a family of functions $\lbrace a_k^x \in l^1 (X) \rbrace_{x \in X}$ such that the following holds (we denote the norm of  $ l^1 (X)$ by $\Vert . \Vert$):
\begin{enumerate}
\item For every $k$ there is $S_k >0$ such that for every $x$, $a_k^x$ is supported on $B(x,S_k)$. 
\item For every $R>0$ we have that
$$\lim_{k \rightarrow \infty} \sup_{x,y \in X, d(x,y) <R} \dfrac{\Vert a_k^x - a_k^y \Vert}{\Vert a_k^x \Vert}  =0.$$
\end{enumerate} 
\end{definition}

\subsection{Asymptotically large depth and connection to property}
Given a metric space $(X,d)$ and $\mathcal{U}$ a cover of $X$. Recall that $\mathcal{U}$ is called uniformly bounded if there is $S_\mathcal{U} >0$, such that $\forall U \in \mathcal{U}, diam (U) <S_\mathcal{U}$. \\
For $x \in X$ will define $m_\mathcal{U} (x)$ to be the multiplicity of $\mathcal{U}$ at $x$, i.e.,:
$$m_\mathcal{U} (x) =  \vert \lbrace U \in \mathcal{U}: x \cap U \neq \emptyset \rbrace \vert$$
$\mathcal{U}$ will be called locally finite if $m_\mathcal{U} (x) < \infty$ for every $x \in X$. \\
\begin{definition}
Let $(X,d)$ be a metric space. We shall say that $X$ has asymptotically large depth if it has a sequence of covers $\lbrace \mathcal{U}_k\rbrace_{k \in \mathbb{N}}$, such that:
\begin{enumerate}
\item  For each $k$, $\mathcal{U}_k$ is uniformly bounded and locally finite.
\item There is $\varepsilon >0$ and a function $f: \mathbb{N} \rightarrow \mathbb{R}_+$ with $\lim f(k) = \infty$ such that for every $k$,
$$ \inf_{x \in X} \left( \frac{1}{m_{\mathcal{U}_k} (x)}\sum_{U \in \mathcal{U}_k, x \in U} e^{\frac{d(x,X \setminus U)}{f(k)}} \right) \geq 1 + \varepsilon.$$
\end{enumerate}  
\end{definition}

\begin{remark}
In the case in which $X$ is bounded and $U =X$, define $d(x,X \setminus U) = d(x, \emptyset) = \infty$. This definition takes care of the trivial case in which $X$ is bounded. 
\end{remark}

\begin{remark}
The motivation behind the above definition comes from the frequent use of the distance $d(x,X \setminus U)$ in proofs of properties that imply property A. For example, one can see $d(x,X \setminus U)$ appearing in the proof that finite asymptotic dimension implies property A in \cite{Will}[proposition 2.2.6] or in the proof that subexponential asymptotic dimension growth implies property A in \cite{Oz}). The main theme of such proofs is that if one can find a sequence of covers such that each point $x$ is "deep enough" in enough sets (i.e., $d(x,X \setminus U)$ is large enough in enough sets) then we get property A. The idea behind the definition above is to try to take this idea to the extreme and find a very weak notion of each point $x$ being "deep enough". The above definition might not seem very enlightening - a cleaner and maybe more intuitive version is given in theorem \ref{subexp function} below. 
\end{remark}

\begin{theorem}
\label{property A}
Let $(X,d)$ be a discrete metric space with bounded geometry. If $X$ has asymptotically large depth then it has property A.
\end{theorem}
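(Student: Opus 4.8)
The plan is to build the property A functions not directly inside $\ell^1(X)$ but inside $\ell^1(\mathcal U_k)$, using the cover elements themselves as the index set; this is what lets the norms be governed by the multiplicities $m_{\mathcal U_k}(x)$ rather than by volumes of metric balls (which, in a bounded geometry space, may grow exponentially and would wreck the estimates). First I would dispose of the trivial case: if $X$ is bounded then property A holds with $a_k^x=\mathbf 1_X$, so assume $X$ is unbounded. Then every $U\in\mathcal U_k$ is a proper subset, so $d(x,X\setminus U)=0$ whenever $x\notin U$ (as then $x\in X\setminus U$), while $d(x,X\setminus U)\ge 0$ always. For each $k$ and $x$ define $\Psi_k(x)\in\ell^1(\mathcal U_k)$ by
$$\Psi_k(x)(U)=e^{d(x,X\setminus U)/f(k)}-1 ,$$
which is nonnegative and, by the previous remark, vanishes off the finite set $\{U\in\mathcal U_k:x\in U\}$, hence is a finitely supported element of $\ell^1(\mathcal U_k)$.

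Next I would record two estimates. The norm lower bound is where hypothesis (2) of asymptotically large depth enters: since the summand is $0$ for $U\not\ni x$,
$$\Vert\Psi_k(x)\Vert_{\ell^1(\mathcal U_k)}=\Bigl(\sum_{U\ni x}e^{d(x,X\setminus U)/f(k)}\Bigr)-m_{\mathcal U_k}(x)\ \geq\ \varepsilon\, m_{\mathcal U_k}(x)\ \geq\ \varepsilon .$$
For the difference, fix $R>0$ and $x,y$ with $d(x,y)<R$, and split $\{U:x\in U\text{ or }y\in U\}$ into sets containing both and sets containing exactly one. For $U\ni x,y$, using that $t\mapsto d(t,X\setminus U)$ is $1$-Lipschitz together with the mean value theorem, the key estimate is
$$\bigl|\Psi_k(x)(U)-\Psi_k(y)(U)\bigr|\ \leq\ e^{R/f(k)}\,\tfrac{R}{f(k)}\,\bigl(\Psi_k(x)(U)+1\bigr);$$
the essential point is that the factor $e^{\max(d(x,X\setminus U),d(y,X\setminus U))/f(k)}$ is absorbed into $\Psi_k(x)(U)+1=e^{d(x,X\setminus U)/f(k)}$, so that no constant depending on the diameter bound $S_k$ of $\mathcal U_k$ survives. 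Summing over such $U$ (at most $m_{\mathcal U_k}(x)\le\varepsilon^{-1}\Vert\Psi_k(x)\Vert$ of them) bounds this part by $\delta_k\Vert\Psi_k(x)\Vert$, where $\delta_k:=e^{R/f(k)}R(1+\varepsilon^{-1})/f(k)$. For $U$ with $x\in U$, $y\notin U$ one has $d(x,X\setminus U)\le d(x,y)<R$, so $\Psi_k(x)(U)<e^{R/f(k)}-1$, and there are at most $m_{\mathcal U_k}(x)$ of them, so this part is at most $\delta_k'\Vert\Psi_k(x)\Vert$ with $\delta_k':=\varepsilon^{-1}(e^{R/f(k)}-1)$; symmetrically the sets with $y\in U$, $x\notin U$ contribute at most $\delta_k'\Vert\Psi_k(y)\Vert$. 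Hence
$$\Vert\Psi_k(x)-\Psi_k(y)\Vert_{\ell^1(\mathcal U_k)}\ \leq\ (\delta_k+\delta_k')\Vert\Psi_k(x)\Vert_{\ell^1(\mathcal U_k)}+\delta_k'\Vert\Psi_k(y)\Vert_{\ell^1(\mathcal U_k)} .$$

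From here I would first bootstrap a comparison of norms: since $\bigl|\Vert\Psi_k(x)\Vert-\Vert\Psi_k(y)\Vert\bigr|\le\Vert\Psi_k(x)-\Psi_k(y)\Vert$, the last display forces $\Vert\Psi_k(y)\Vert\le 2\Vert\Psi_k(x)\Vert$ once $\delta_k,\delta_k'$ are small enough, i.e.\ for all $k$ beyond some $k_0(R)$, and then $\Vert\Psi_k(x)-\Psi_k(y)\Vert\le(\delta_k+3\delta_k')\Vert\Psi_k(x)\Vert$. Finally I would transfer to $\ell^1(X)$: pick a point $c(U)\in U$ for each $U\in\mathcal U_k$ and set $a_k^x(z):=\sum_{U\in\mathcal U_k:\,c(U)=z}\Psi_k(x)(U)$, the push-forward of $\Psi_k(x)$ along $c$. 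Push-forward along $c$ is norm-nonincreasing on $\ell^1$ and preserves norms of nonnegative functions, so $\Vert a_k^x\Vert=\Vert\Psi_k(x)\Vert_{\ell^1(\mathcal U_k)}$ and $\Vert a_k^x-a_k^y\Vert\le\Vert\Psi_k(x)-\Psi_k(y)\Vert_{\ell^1(\mathcal U_k)}$; also $a_k^x(z)\neq0$ forces $z=c(U)\in U\ni x$ for some $U$, so $d(x,z)<S_k$, which is condition (1) of property A with this $S_k$. Dividing the previous inequality by $\Vert a_k^x\Vert$ gives $\sup_{d(x,y)<R}\Vert a_k^x-a_k^y\Vert/\Vert a_k^x\Vert\le\delta_k+3\delta_k'$ for $k\ge k_0(R)$, and $\delta_k+3\delta_k'\to0$ because $f(k)\to\infty$; this is condition (2). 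I expect the one genuinely delicate point to be the difference estimate above: one must organize the split so that every constant stays independent of the diameter bounds $S_k$ (which the hypothesis does not control) and of the growth of the metric, and this is exactly what dictated both the passage to $\ell^1(\mathcal U_k)$ and the particular weight $e^{d(x,X\setminus U)/f(k)}-1$, whose ``$-1$'' yields the clean lower bound $\varepsilon\,m_{\mathcal U_k}(x)$ and whose exponential shape makes the Lipschitz estimate self-absorbing.
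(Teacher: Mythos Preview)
Your proof is correct and follows essentially the same route as the paper: both define the property~A functions via the weights $e^{d(x,X\setminus U)/f(k)}-1$ indexed by cover elements (the paper collapses to $\ell^1(X)$ immediately via chosen points $x_U\in U$, while you work in $\ell^1(\mathcal U_k)$ first and push forward at the end), both use the $1$-Lipschitz property of $d(\cdot,X\setminus U)$ to control the numerator and the depth hypothesis to control the denominator. The only notable organizational difference is the difference estimate: the paper uses the single inequality $|e^{a}-e^{b}|\le(1-e^{-R/f(k)})\max(e^{a},e^{b})$ valid for all $U$ at once, then replaces $\sum_{U\ni y}e^{d(y,X\setminus U)/f(k)}$ by $\sum_{U\ni x}e^{d(x,X\setminus U)/f(k)}$ via a WLOG (taking the larger of the two), which avoids your three-case split and the norm-comparison bootstrap entirely and leads directly to $\Vert a_k^x-a_k^y\Vert/\Vert a_k^x\Vert\le \tfrac{2(1+\varepsilon)}{\varepsilon}(1-e^{-R/f(k)})$.
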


\begin{proof}
We shall construct a family of functions $\lbrace a_k^x \in l^1 (X) \rbrace_{x \in X}$ as in the definition of property A stated above. \\
For $k \in \mathbb{N}$, choose $x_U \in U$ for every $U \in \mathcal{U}_k$ and denote $\delta_U = \delta_{x_U} \in l^1 (X)$. Define $a_k^x : X \rightarrow l^1 (X)$ as following:
$$a_k^x  = \sum_{U \in \mathcal{U}_k} \left( e^{\frac{d(x,X \setminus U)}{f(k)}} -1 \right) \delta_U.$$ 
First, we'll show that we can take $S_k$ to be the uniform bound on the diameter of the sets in $\mathcal{U}_k$. Indeed, if $x \notin U$, then $d(x,X \setminus U) =0$ and therefore $e^{\frac{d(x,X \setminus U)}{f(k)}} -1 =e^0-1=0$. This yields that $a_k^x$ is supported on the set 
$$\bigcup_{U \in \mathcal{U}_k, x \in U} U \subseteq B(x, S_k).$$ 
Second, for any $R>0$ and any $x,y \in X$ with $d(x,y) <R$ we have that 
$$a_k^x - a_k^y = \sum_{U \in \mathcal{U}_k} \left( e^{\frac{d(x,X \setminus U)}{f(k)}} - e^{\frac{d(x,X \setminus U)}{f(k)}} \right) \delta_U.$$
If $d(x,X \setminus U) > d(y,X \setminus U) \geq 0$, then
$$0 \leq e^{\frac{d(x,X \setminus U)}{f(k)}} - e^{\frac{d(x,X \setminus U)}{f(k)}} = e^{\frac{d(x,X \setminus U)}{f(k)}} \left( 1- e^{\frac{d(x,X \setminus U) - d(x,X \setminus U)}{f(k)}} \right) \leq e^{\frac{d(x,X \setminus U)}{f(k)}} \left( 1 - e^\frac{-R}{f(k)} \right).$$
Therefore, by symmetry, we get that 
$$\Vert a_k^x - a_k^y \Vert \leq \left( 1 - e^\frac{-R}{f(k)} \right) \left( \sum_{U \in \mathcal{U}_k, x \in U} e^{\frac{d(x,X \setminus U)}{f(k)}} + \sum_{U \in \mathcal{U}_k, y \in U} e^{\frac{d(y,X \setminus U)}{f(k)}} \right).$$
Without loss of generality, we can assume
$$\sum_{U \in \mathcal{U}_k, x \in U} e^{\frac{d(x,X \setminus U)}{f(k)}} \geq \sum_{U \in \mathcal{U}_k, y \in U} e^{\frac{d(y,X \setminus U)}{f(k)}},$$
and therefore
$$\Vert a_k^x - a_k^y \Vert \leq 2 \left( 1 - e^\frac{-R}{f(k)} \right) \left( \sum_{U \in \mathcal{U}_k, x \in U} e^{\frac{d(x,X \setminus U)}{f(k)}}\right).$$
Notice that
$$\Vert a_k^x \Vert = \sum_{U \in \mathcal{U}_k, x \in U} \left( e^{\frac{d(x,X \setminus U)}{f(k)}} - 1 \right) = \left( \sum_{U \in \mathcal{U}_k, x \in U} e^{\frac{d(x,X \setminus U)}{f(k)}} \right) - m_{\mathcal{U}_k} (x) . $$
Therefore we get
$$\dfrac{\Vert a_k^x - a_k^y \Vert}{\Vert a_k^x \Vert } \leq \dfrac{2 \left( 1 - e^\frac{-R}{f(k)} \right) \left( \sum_{U \in \mathcal{U}_k, x \in U} e^{\frac{d(x,X \setminus U)}{f(k)}}\right)}{\left( \sum_{U \in \mathcal{U}_k, x \in U} e^{\frac{d(x,X \setminus U)}{f(k)}} \right) - m_{\mathcal{U}_k} (x)} = $$
$$ = \dfrac{2 \left( 1 - e^\frac{-R}{f(k)} \right)}{1 - \dfrac{m_{\mathcal{U}_k} (x)}{\left( \sum_{U \in \mathcal{U}_k, x \in U} e^{\frac{d(x,X \setminus U)}{f(k)}} \right) }} \leq \dfrac{2 \left( 1 - e^\frac{-R}{f(k)} \right)}{1 -\frac{1}{1+\varepsilon}}= \dfrac{2 + 2 \varepsilon}{\varepsilon} \left( 1 - e^\frac{-R}{f(k)} \right),$$
where the last inequality is due to the condition in the definition of asymptotically large depth. The computation above yields that for every $k$ and every $R>0$ we have that
$$\sup_{x,y \in X, d(x,y) <R} \dfrac{\Vert a_k^x - a_k^y \Vert}{\Vert a_k^x \Vert } \leq \dfrac{2 + 2 \varepsilon}{\varepsilon} \left( 1 - e^\frac{-R}{f(k)} \right)  .$$
Recall that $\lim f(k) = \infty$ and therefore for every $R>0$, we have that
$$\lim_{k \rightarrow \infty} \sup_{x,y \in X, d(x,y) <R}  \dfrac{\Vert a_k^x - a_k^y \Vert}{\Vert a_k^x \Vert } =0,$$
which finishes the proof.
\end{proof}

\section{Equivalent definitions}
\begin{theorem}
\label{definitions}
Let $(X,d)$ a metric space. The following are equivalent:
\begin{enumerate}
\item $X$ has asymptotically large depth.
\item $X$ has a sequence of covers $\lbrace \mathcal{U}_k\rbrace_{k \in \mathbb{N}}$, such that:
\begin{enumerate}
\item  For each $k$, $\mathcal{U}_k$ is uniformly bounded and locally finite.
\item There is $\varepsilon >0$ and a sequence $\lbrace c_k \rbrace_{k \in \mathbb{N}}$ with $c_k > 1, \forall k$ and $\lim c_k =1$ such that for every $k$
$$ \inf_{x \in X} \left( \frac{1}{m_{\mathcal{U}_k} (x)}\sum_{U \in \mathcal{U}_k, x \in U} c_k^{d(x,X \setminus U)} \right) \geq 1 + \varepsilon.$$
\end{enumerate}  
\item There is $\varepsilon >0$ such that for any number $c>1$, $X$ has a cover $\mathcal{U} (c)$, such that:
\begin{enumerate}
\item $\mathcal{U} (c)$ is uniformly bounded and locally finite.
\item
$$ \inf_{x \in X} \left( \frac{1}{m_{\mathcal{U} (c)} (x)}\sum_{U \in \mathcal{U} (c), x \in U} c^{d(x,X \setminus U)} \right) \geq 1 + \varepsilon.$$
\end{enumerate} 
\item $X$ has a sequence of covers $\lbrace \mathcal{U}_k\rbrace_{k \in \mathbb{N}}$, such that:
\begin{enumerate}
\item  For each $k$, $\mathcal{U}_k$ is uniformly bounded and locally finite.
\item There is a function $f' : \mathbb{N} \rightarrow \mathbb{R}_+$ with $\lim f' (k) = \infty$ such that for every $k$,
$$\lim_{k \rightarrow \infty} \inf_{x \in X} \left( \frac{1}{m_{\mathcal{U}_k} (x)}\sum_{U \in \mathcal{U}_k, x \in U} e^{\frac{d(x,X \setminus U)}{f' (k)}} \right) = \infty.$$
\end{enumerate}  
\item $X$ has a sequence of covers $\lbrace \mathcal{U}_k\rbrace_{k \in \mathbb{N}}$, such that:
\begin{enumerate}
\item  For each $k$, $\mathcal{U}_k$ is uniformly bounded and locally finite.
\item There is a sequence $\lbrace c_k \rbrace_{k \in \mathbb{N}}$ with $c_k > 1, \forall k$ and $\lim c_k =1$ such that 
$$\lim_{k \rightarrow \infty} \inf_{x \in X} \left( \frac{1}{m_{\mathcal{U}_k} (x)}\sum_{U \in \mathcal{U}_k, x \in U} c_k^{d(x,X \setminus U)} \right) =\infty.$$
\end{enumerate}  
\item For any $k>0$ and any $c>1$, $X$ has a cover $\mathcal{U} (c,k)$, such that:
\begin{enumerate}
\item $\mathcal{U} (c,k)$ is uniformly bounded and locally finite.
\item
$$ \inf_{x \in X} \left( \frac{1}{m_{\mathcal{U} (c,k)} (x)}\sum_{U \in \mathcal{U} (c,k), x \in U} c^{d(x,X \setminus U)} \right) \geq k.$$
\end{enumerate} 
\end{enumerate}
\end{theorem}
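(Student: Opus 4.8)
The plan is to split the six conditions into two groups: the \emph{bounded} group $\{(1),(2),(3)\}$, in which the averaged sum is only required to stay above a fixed $1+\varepsilon$, and the \emph{divergent} group $\{(4),(5),(6)\}$, in which it is required to tend to infinity. I would prove the equivalences inside each group by elementary manipulations, and then connect the two groups through $(3)\Leftrightarrow(6)$, which carries the only real content.

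The internal equivalences use two elementary observations. The first is a change of variables: setting $c_k=e^{1/f(k)}$ turns $e^{d(x,X\setminus U)/f(k)}$ into $c_k^{d(x,X\setminus U)}$, and the condition $f(k)\to\infty$ is equivalent to $c_k>1$ together with $c_k\to 1$ (the inverse substitution being $f(k)=1/\log c_k$). This gives $(1)\Leftrightarrow(2)$ and $(4)\Leftrightarrow(5)$ at once. The second observation is that for fixed $c>1$ the map $t\mapsto c^t$ is nondecreasing on $[0,\infty]$ (with $c^\infty=\infty$, matching the convention for bounded $X$): hence a cover witnessing one of these inequalities for a base $c_0$ witnesses the same inequality, with the same lower bound, for every base $c\ge c_0$. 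Consequently one passes from a ``for all $c>1$'' statement to a ``sequence $c_k\to 1$'' statement by evaluating the given covers along any sequence $c_k\downarrow 1$, and back by, for a prescribed base $c>1$, choosing an index $k$ with $c_k\le c$ (possible since $c_k\to 1<c$) and reusing $\mathcal{U}_k$ --- for the divergent group one additionally takes the index large enough that the averaged sum already exceeds the prescribed value, which is possible because it tends to infinity. This yields $(2)\Leftrightarrow(3)$ and $(5)\Leftrightarrow(6)$. The implication $(6)\Rightarrow(3)$ is immediate: apply $(6)$ with $k=2$ to get, for every $c>1$, a cover witnessing $(3)$ with $\varepsilon=1$.

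The heart of the theorem is $(3)\Rightarrow(6)$: amplifying ``$\ge 1+\varepsilon$'' to ``$\ge k$'' for an arbitrary prescribed $k$. The mechanism is that raising the base to a power $t\ge 1$ amplifies the averaged sum super-linearly, via Jensen's inequality applied to the convex function $\phi(s)=s^{t}$: for every $x$ and every uniformly bounded locally finite cover $\mathcal{V}$,
$$\frac{1}{m_{\mathcal{V}}(x)}\sum_{V\in\mathcal{V},\, x\in V}\big(c_0^{\,d(x,X\setminus V)}\big)^{t}\ \ge\ \left(\frac{1}{m_{\mathcal{V}}(x)}\sum_{V\in\mathcal{V},\, x\in V}c_0^{\,d(x,X\setminus V)}\right)^{\!t}.$$
Given $k>0$ and $c>1$, if $k\le 1+\varepsilon$ then the cover $\mathcal{U}(c)$ from $(3)$ already works; otherwise put $t=\dfrac{\log k}{\log(1+\varepsilon)}>1$ and $c_0=c^{1/t}>1$, take $\mathcal{U}(c,k):=\mathcal{U}(c_0)$ from $(3)$, and note that $\big(c_0^{\,d}\big)^{t}=c^{\,d}$, so the displayed inequality together with the defining property of $\mathcal{U}(c_0)$ yields
$$\inf_{x\in X}\ \frac{1}{m_{\mathcal{U}(c_0)}(x)}\sum_{U\in \mathcal{U}(c_0),\, x\in U}c^{\,d(x,X\setminus U)}\ \ge\ (1+\varepsilon)^{t}\ =\ k.$$

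Throughout, uniform boundedness and local finiteness are automatic because each cover we produce is literally one of the covers supplied by the hypothesis, merely re-indexed or re-used; and the degenerate case of bounded $X$ is handled by the convention $d(x,\emptyset)=\infty$, under which every one of the six conditions holds using the single cover $\{X\}$. I expect the convexity amplification in $(3)\Rightarrow(6)$ to be the only genuinely substantive step, everything else being routine rewriting.
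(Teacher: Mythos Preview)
Your proposal is correct. The organisation into the bounded group $\{(1),(2),(3)\}$ and the divergent group $\{(4),(5),(6)\}$, together with the change of base $c_k=e^{1/f(k)}$ and the monotonicity-in-the-base argument, matches exactly what the paper does for the cycles $(1)\Rightarrow(2)\Rightarrow(3)\Rightarrow(1)$ and $(4)\Rightarrow(5)\Rightarrow(6)\Rightarrow(4)$.

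The one genuine difference is in the bridge between the two groups. The paper links them via $(1)\Rightarrow(4)$: keeping the same covers, it sets $f'(k)=\sqrt{f(k)}$, splits the sets containing $x$ into $A(x)=\{U: e^{d(x,X\setminus U)/f(k)}\ge 1+\varepsilon/2\}$ and its complement $B(x)$, and estimates the resulting average from below by $\tfrac{\varepsilon}{2}(1+\tfrac{\varepsilon}{2})^{\sqrt{f(k)}-1}\to\infty$. You instead link them via $(3)\Rightarrow(6)$, and your argument is cleaner: you exploit the freedom in $(3)$ to change the base, setting $c_0=c^{1/t}$ with $t=\log k/\log(1+\varepsilon)$, and then amplify using Jensen's inequality for $s\mapsto s^{t}$ to get the lower bound $(1+\varepsilon)^{t}=k$. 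Both arguments are at bottom ``raise to a large power''; the paper's $A(x)/B(x)$ split is a hands-on substitute for the convexity inequality you invoke directly. Your version is shorter and identifies the mechanism more transparently, at the cost of citing Jensen; the paper's version is entirely self-contained but somewhat more laboured.
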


\begin{proof}
$(1) \Rightarrow (2)$: Let $f: \mathbb{N} \rightarrow \mathbb{R}_+$ as in the definition of asymptotically large depth, and take $c_k = e^{\frac{1}{f(k)}}$. Since $f(k) >0$ and $\lim f(k) = \infty$ we get that $c_k >1$ and $\lim c_k =1$. We finish by taking $\lbrace \mathcal{U}_k \rbrace_{k \in \mathbb{N}}$ to be as in the definition of asymptotically large depth and noticing that for every $k$,
 $$\inf_{x \in X} \left( \frac{1}{m_{\mathcal{U}_k}} \sum_{U \in \mathcal{U}_k, x \in U} c_k^{d(x,X \setminus U)}  \right) = \inf_{x \in X} \left( \frac{1}{m_{\mathcal{U}_k}} \sum_{U \in \mathcal{U}_k, x \in U} e^{\frac{d(x,X \setminus U)}{f (k)}} \right) \geq 1 + \varepsilon.$$
$(2) \Rightarrow (3)$: Take $\varepsilon>0$ to be the same as in $(2)$. Let $c>1$ and let $\lbrace c_k \rbrace_{k \in \mathbb{N}}$ be the sequence in $(2)$. Since $\lim c_k =1$, there is some $k_0$ such that $c>c_{k_0}$. Take $\mathcal{U} (c) = \mathcal{U}_{k_0}$ and finish by 
$$\inf_{x \in X} \left( \frac{1}{m_{\mathcal{U} (c)} (x)}\sum_{U \in \mathcal{U} (c), x \in U} c^{d(x,X \setminus U)} \right)  \geq \inf_{x \in X} \left( \frac{1}{m_{\mathcal{U}_{k_0}} (x)}\sum_{U \in \mathcal{U}_{k_0}, x \in U} c_{k_0}^{d(x,X \setminus U)} \right) \geq 1 + \varepsilon.$$
$(3) \Rightarrow (1)$: Take $\varepsilon>0$ to be the same as in $(3)$. For every $k \in \mathbb{N}$, take $\mathcal{U}_k = \mathcal{U} (e^\frac{1}{k})$ and $f(k)=k$. Finish by 
$$\inf_{x \in X} \left( \frac{1}{m_{\mathcal{U}_k (x)}} \sum_{U \in \mathcal{U}_k, x \in U} e^{\frac{d(x,X \setminus U)}{f(k)}} \right)  = \inf_{x \in X} \left( \frac{1}{m_{\mathcal{U} (e^\frac{1}{k})} (x)}\sum_{U \in \mathcal{U} (e^\frac{1}{k}), x \in U} (e^\frac{1}{k})^{d(x,X \setminus U)} \right)  \geq 1 + \varepsilon.$$
$(4) \Rightarrow (5), (5) \Rightarrow (6), (6) \Rightarrow (4)$: Same proofs as $(1) \Rightarrow (2), (2) \Rightarrow (3), (3) \Rightarrow (1)$.\\
$(4) \Rightarrow (1)$: Obvious. \\
$(1) \Rightarrow (4)$: Take $\lbrace \mathcal{U}_k\rbrace_{k \in \mathbb{N}}$ as in $(1)$ and $f' (k) = \sqrt{f(k)}$. For $x \in X$ denote:
$$A (x) = \lbrace U \in \mathcal{U}_k : x \in U, e^{\frac{d(x,X \setminus U)}{f (k)}} \geq 1 + \frac{\varepsilon}{2} \rbrace,$$
$$B (x) = \lbrace U \in \mathcal{U}_k : x \in U, e^{\frac{d(x,X \setminus U)}{f (k)}} < 1 + \frac{\varepsilon}{2} \rbrace .$$
Since for every $x$ we have by assumption
$$ \left( \frac{1}{m_{\mathcal{U}_k} (x)}\sum_{U \in \mathcal{U}_k, x \in U} e^{\frac{d(x,X \setminus U)}{f(k)}} \right) \geq 1 + \varepsilon,$$
we get that $A(x)$ is not empty. For any $x \in X$ we have that:
$$ \frac{1}{m_{\mathcal{U}_k} (x)}\sum_{U \in \mathcal{U}_k, x \in U} e^{\frac{d(x,X \setminus U)}{f' (k)}} = \frac{1}{m_{\mathcal{U}_k} (x)}\sum_{U \in \mathcal{U}_k, x \in U} (e^{\frac{d(x,X \setminus U)}{f (k)}})^{\sqrt{f(k)}} \geq$$
$$\geq \frac{1}{m_{\mathcal{U}_k} (x)}\sum_{U \in A(x)} (e^{\frac{d(x,X \setminus U)}{f (k)}})^{\sqrt{f(k)}} = \frac{1}{m_{\mathcal{U}_k} (x)}\sum_{U \in A(x)} (e^{\frac{d(x,X \setminus U)}{f (k)}}) (e^{\frac{d(x,X \setminus U)}{f (k)}})^{\sqrt{f(k)}-1}  \geq$$
$$\geq \frac{1}{m_{\mathcal{U}_k} (x)}\sum_{U \in A(x)} (e^{\frac{d(x,X \setminus U)}{f (k)}}) (1+ \frac{\varepsilon}{2})^{\sqrt{f(k)}-1} \geq $$
$$\geq (1+ \frac{\varepsilon}{2})^{\sqrt{f(k)}-1} \left( \frac{1}{m_{\mathcal{U}_k} (x)}\sum_{U \in \mathcal{U}_k, x \in U} (e^{\frac{d(x,X \setminus U)}{f (k)}}) - \frac{1}{m_{\mathcal{U}_k} (x)}\sum_{U \in B(x)} (e^{\frac{d(x,X \setminus U)}{f (k)}}) \right) \geq $$
$$\geq (1+ \frac{\varepsilon}{2})^{\sqrt{f(k)}-1} ( 1+ \varepsilon - (1+\frac{\varepsilon}{2})) = \frac{\varepsilon}{2} (1+ \frac{\varepsilon}{2})^{\sqrt{f(k)}-1}$$
Since this is true for any $x \in X$, we get that
$$\inf_{x \in X} \left( \frac{1}{m_{\mathcal{U}_k} (x)}\sum_{U \in \mathcal{U}_k, x \in U} e^{\frac{d(x,X \setminus U)}{f' (k)}} \right) \geq \frac{\varepsilon}{2} (1+ \frac{\varepsilon}{2})^{\sqrt{f(k)}-1},$$
and since $\lim f(k) = \infty$, we get
$$\lim_{k \rightarrow \infty} \inf_{x \in X} \left( \frac{1}{m_{\mathcal{U}_k} (x)}\sum_{U \in \mathcal{U}_k, x \in U} e^{\frac{d(x,X \setminus U)}{f' (k)}} \right) = \infty.$$

\end{proof}

Recall that a monotone increasing function $g:\mathbb{R}_{\geq 0} \rightarrow  \mathbb{R}_+$ is said to have subexponential growth if for every $b>1$,
$$\lim_{t \rightarrow  \infty} \dfrac{g(t )}{b^t} = 0,$$
or equivalently, if for every $b>1$
$$\lim_{t \rightarrow  \infty} \dfrac{b^t}{g(t )} = \infty .$$

\begin{theorem}
\label{subexp function}
Let $(X,d)$ be a metric space. $X$ has asymptotically large depth if and only if there is a monotone increasing function $g: \mathbb{R}_{\geq 0} \rightarrow  \mathbb{R}_{+}$ with subexponential growth and a sequence of covers $\lbrace \mathcal{U}_k\rbrace_{k \in \mathbb{N}}$ of $X$ such that:
\begin{enumerate}
\item  For each $k$, $\mathcal{U}_k$ is uniformly bounded and locally finite.
\item 
$$\lim_{k \rightarrow \infty} \inf_{x \in X} \left( \frac{1}{m_{\mathcal{U}_k} (x)}\sum_{U \in \mathcal{U}_k, x \in U} g(d(x,X \setminus U)) \right) =\infty.$$
\end{enumerate}
\end{theorem}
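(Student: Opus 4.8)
The plan is to route everything through the equivalent formulations collected in Theorem~\ref{definitions}: formulation (6) will be the convenient target for the ``if'' direction, and formulation (4) the convenient starting point for the ``only if'' direction.

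\emph{Sufficiency of a subexponential $g$.} Assume we are given a monotone increasing $g$ of subexponential growth and covers $\{\mathcal{U}_k\}$ with $M_k:=\inf_{x}\frac{1}{m_{\mathcal{U}_k}(x)}\sum_{U\ni x}g(d(x,X\setminus U))\to\infty$. I would verify condition (6) of Theorem~\ref{definitions}. Fix $c>1$ and a target value $N>0$. Subexponential growth gives $T\ge 0$ with $g(t)\le c^{t}$ for every $t\ge T$. For any $x$ and $k$, split the sum over $\{U\in\mathcal{U}_k:x\in U\}$ according to whether $d(x,X\setminus U)\ge T$ or not: on the first group $c^{d(x,X\setminus U)}\ge g(d(x,X\setminus U))$, while the second group contributes at most $g(T)$ to the normalized sum, since $g$ is monotone and the number of such $U$ is at most $m_{\mathcal{U}_k}(x)$. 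Hence $\frac{1}{m_{\mathcal{U}_k}(x)}\sum_{U\ni x}c^{d(x,X\setminus U)}\ge M_k-g(T)$ for all $x$, and choosing $k$ with $M_k\ge N+g(T)$ exhibits a cover (uniformly bounded and locally finite, being one of the $\mathcal{U}_k$) that witnesses (6) for this $c$ and $N$. By Theorem~\ref{definitions} this gives asymptotically large depth.

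\emph{Necessity of a subexponential $g$.} Assume $X$ has asymptotically large depth and take covers $\{\mathcal{U}_k\}$ and $f':\mathbb{N}\to\mathbb{R}_{+}$ as in condition (4) of Theorem~\ref{definitions}, with $f'(k)\to\infty$ and $L_k:=\inf_{x}\frac{1}{m_{\mathcal{U}_k}(x)}\sum_{U\ni x}e^{d(x,X\setminus U)/f'(k)}\to\infty$; let $S_k$ be a uniform bound for the diameters of $\mathcal{U}_k$. Since $d(x,X\setminus U)\le S_k$ one has $L_k\le e^{S_k/f'(k)}$, hence $S_k\ge f'(k)\ln L_k\to\infty$. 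Put $\tau_k:=f'(k)\ln(L_k/2)$ for all large $k$ (so that $L_k>2$); then $0<\tau_k<S_k$ and $\tau_k\to\infty$. I would pass to a subsequence $k_1<k_2<\cdots$, chosen inductively so that $f'(k_j)$ is increasing, the windows $I_j:=[\tau_{k_j},S_{k_j}]$ are pairwise disjoint and ordered (i.e.\ $S_{k_j}<\tau_{k_{j+1}}$), and moreover $L_{k_{j+1}}/2>e^{S_{k_j}/f'(k_j)}$; each of these requirements can be met because $f'(k),\tau_k,L_k\to\infty$. Now define $g:\mathbb{R}_{\ge 0}\to\mathbb{R}_{+}$ to equal $1$ on $[0,\tau_{k_1})$, to equal the exponential $t\mapsto e^{t/f'(k_j)}$ on each window $I_j$, and to be the constant $e^{S_{k_j}/f'(k_j)}$ on each gap $(S_{k_j},\tau_{k_{j+1}})$. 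The inequality $L_{k_{j+1}}/2>e^{S_{k_j}/f'(k_j)}$ makes $g$ monotone increasing, and since $g(t)\le e^{t/f'(k_j)}$ for every $t\ge\tau_{k_j}$ while $f'(k_j)\to\infty$, one gets $g(t)/b^{t}\le b^{-t/2}$ for all sufficiently large $t$, for each fixed $b>1$; so $g$ has subexponential growth. Finally, for the cover $\mathcal{U}_{k_j}$ every relevant distance is $\le S_{k_j}$, so $g(d(x,X\setminus U))=e^{d(x,X\setminus U)/f'(k_j)}$ whenever $d(x,X\setminus U)\ge\tau_{k_j}$; the same split-at-$\tau_{k_j}$ estimate as above, together with $e^{\tau_{k_j}/f'(k_j)}=L_{k_j}/2$, yields $\frac{1}{m_{\mathcal{U}_{k_j}}(x)}\sum_{U\ni x}g(d(x,X\setminus U))\ge L_{k_j}-L_{k_j}/2=L_{k_j}/2$ for all $x$, and $L_{k_j}/2\to\infty$.

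The step I expect to be the real obstacle is the explicit construction of $g$ in the necessity direction: $g$ has to be a single monotone function of subexponential growth that nevertheless reproduces the exponential $t\mapsto e^{t/f'(k_j)}$ on the window $[\tau_{k_j},S_{k_j}]$ for every $j$ — this matching is precisely what lets the weighted-average lower bound pass from the exponential sums to $\sum g$, and a cruder choice of $g$ (e.g.\ a step function) destroys it. Thinning the sequence of covers until the windows are disjoint and widely separated, and using that $L_{k_{j+1}}$ may be taken far larger than $e^{S_{k_j}/f'(k_j)}$ so that monotonicity survives the gluing, is the one genuinely delicate point; everything else is the truncation argument already familiar from the proofs of Theorem~\ref{property A} and Theorem~\ref{definitions}.
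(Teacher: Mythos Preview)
Your argument is correct in both directions. The sufficiency half is essentially identical to the paper's: you verify condition~(6) of Theorem~\ref{definitions} by the same split-at-$T$ estimate the paper uses to verify~(5) (with $c_k=1+\tfrac{1}{k}$ and $T=T_k$), so there is no substantive difference there.

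The necessity half is where the two proofs genuinely diverge in construction, though not in spirit. The paper starts from condition~(5), passes to a subsequence with $S_k$ increasing and $c_k$ decreasing, and glues the exponentials \emph{additively}: on $(S_{k-1},S_k]$ one sets $g(t)=g(S_{k-1})+c_k^{t}-c_k^{S_{k-1}}$. The payoff of this gluing is the global inequality $g(t)\ge c_k^{t}$ for every $t\le S_k$, which makes the final estimate a one-liner (no truncation at the end). Your construction instead lays down disjoint windows $[\tau_{k_j},S_{k_j}]$ on which $g$ literally equals $e^{t/f'(k_j)}$, with constant plateaus in the gaps; this forces a more delicate inductive choice of subsequence (three simultaneous conditions) and an extra split-at-$\tau_{k_j}$ step at the end, but makes the subexponentiality of $g$ very transparent via $g(t)\le e^{t/f'(k_j)}$ for $t\ge\tau_{k_j}$. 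Both constructions realise the same idea---patching together exponentials with rates tending to zero---and each buys simplicity in one place at the cost of a little extra bookkeeping elsewhere.
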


\begin{proof}
Assume that $X$ has asymptotically large depth, then by $(5)$ in theorem \ref{definitions} we have that $X$ has a sequence of covers $\lbrace \mathcal{U}_k\rbrace_{k \in \mathbb{N}}$, such that:
\begin{enumerate}
\item  For each $k$, $\mathcal{U}_k$ is uniformly bounded and locally finite.
\item There is a sequence $\lbrace c_k \rbrace_{k \in \mathbb{N}}$ with $c_k > 1, \forall k$ and $\lim c_k =1$ such that 
$$\lim_{k \rightarrow \infty} \inf_{x \in X} \left( \frac{1}{m_{\mathcal{U}_k} (x)}\sum_{U \in \mathcal{U}_k, x \in U} c_k^{d(x,X \setminus U)} \right) =\infty.$$
\end{enumerate}  
Denote by $S_k$ the bound of the diameters of the sets in $\mathcal{U}_k$. Notice that for every $U \in \mathcal{U}_k$ and for every $x \in X$ we have $S_k \geq d(x, X \setminus U)$. By 
$$\lim_{k \rightarrow \infty} \inf_{x \in X} \left( \frac{1}{m_{\mathcal{U}_k} (x)}\sum_{U \in \mathcal{U}_k, x \in U} c_k^{d(x,X \setminus U)} \right) =\infty, $$
we get that $\lim S_k = \infty$. By passing to a subsequence we can assume that $\lbrace S_k \rbrace_{k \in \mathbb{N}}$ is strictly monotone increasing and that $\lbrace c_k \rbrace_{k \in \mathbb{N}}$ is monotone decreasing. 
Define
$$g(t) = \begin{cases}
c_1^t & t \in [0,S_1] \\
g(S_{k-1}) + c_k^{S_{k-1} + t} -  c_k^{S_{k-1}} & S_{k-1} < t \leq S_k 
\end{cases} . $$ 
$g(t)$ is monotone increasing and since $\lbrace c_k \rbrace$ is monotone decreasing we have that:
$$\forall t \geq S_{k-1}, g(t) \leq g(S_{k-1}) + c_k^{S_{k-1} + t} - c_k^{S_{k-1}},$$
and
$$\forall t \leq S_{k}, g(t) \geq c_k^t .$$
For every $b >1$, there is $k$ such that $b > c_k$ and therefore for every $t \geq S_{k-1}$ we have
$$\dfrac{g(t)}{b^t} \leq \dfrac{g(S_{k-1}) + c_k^{S_{k-1} + t} - c_k^{S_{k-1}}}{b^t} = \dfrac{g(S_{k-1}) - c_k^{S_{k-1}}}{b^t} + c_k^{S_{k-1}} \left( \dfrac{c_k}{b} \right)^t,$$
and therefore, 
$$\lim_{t \rightarrow \infty} \dfrac{g(t)}{b^t} =0 .$$
Notice that for every $k$ we have
$$\inf_{x \in X} \left( \frac{1}{m_{\mathcal{U}_k} (x)}\sum_{U \in \mathcal{U}_k, x \in U} g(d(x,X \setminus U)) \right) \geq \inf_{x \in X} \left( \frac{1}{m_{\mathcal{U}_k} (x)}\sum_{U \in \mathcal{U}_k, x \in U} c_k^{d(x,X \setminus U)} \right).$$
(again, we use the fact that for every $U \in \mathcal{U}_k$ and for every $x \in X$ we have $S_k \geq d(x, X \setminus U)$). \\
So we get 
$$\lim_{k \rightarrow \infty} \inf_{x \in X} \left( \frac{1}{m_{\mathcal{U}_k} (x)}\sum_{U \in \mathcal{U}_k, x \in U} g(d(x,X \setminus U)) \right) =\infty.$$
In the other direction, assume that there is a monotone increasing function $g: \mathbb{R}_{\geq 0} \rightarrow  \mathbb{R}_{+}$ with subexponential growth and a sequence of covers $\lbrace \mathcal{U}_k\rbrace_{k \in \mathbb{N}}$ of $X$ such that:
\begin{enumerate}
\item  For each $k$, $\mathcal{U}_k$ is uniformly bounded and locally finite.
\item 
$$\lim_{k \rightarrow \infty} \inf_{x \in X} \left( \frac{1}{m_{\mathcal{U}_k} (x)}\sum_{U \in \mathcal{U}_k, x \in U} g(d(x,X \setminus U)) \right) =\infty.$$
\end{enumerate}
Note that by the above condition, we have that $\lim_{t \rightarrow \infty}  g(t) = \infty$. Let $\lbrace T_k \rbrace$ be a monotone increasing sequence such that $\lim T_k = \infty$ and such that for every $k$, we have
$$\forall t \geq T_k, g (t) \leq (1+\frac{1}{k})^t .$$
By passing to a subsequence of $\lbrace \mathcal{U}_k\rbrace_{k \in \mathbb{N}}$, we can assume that for every $k$, we have
$$ \inf_{x \in X} \left( \frac{1}{m_{\mathcal{U}_k} (x)}\sum_{U \in \mathcal{U}_k, x \in U} g(d(x,X \setminus U)) \right) \geq 2 g(T_k) .$$
Note that
$$\inf_{x \in X} \left( \frac{1}{m_{\mathcal{U}_k} (x)}\sum_{U \in \mathcal{U}_k, d(x,X \setminus U) \geq T_k} g(d(x,X \setminus U)) \right) \geq$$
$$\geq  \inf_{x \in X} \left( \frac{1}{m_{\mathcal{U}_k} (x)}\sum_{U \in \mathcal{U}_k, x \in U} g(d(x,X \setminus U))  - \frac{1}{m_{\mathcal{U}_k} (x)}\sum_{U \in \mathcal{U}_k, x \in U, d(x,X \setminus U) \leq T_k} g(d(x,X \setminus U))  \right) \geq $$
$$\geq 2g(T_k) - g(T_k) = g (T_k).$$
Therefore
$$\inf_{x \in X} \left( \frac{1}{m_{\mathcal{U}_k} (x)}\sum_{U \in \mathcal{U}_k, x \in U} (1+\frac{1}{k})^{d(x,X \setminus U)} \right) \geq \inf_{x \in X} \left( \frac{1}{m_{\mathcal{U}_k} (x)}\sum_{U \in \mathcal{U}_k, d(x,X \setminus U) \geq T_k} (1+\frac{1}{k})^{d(x,X \setminus U)} \right) \geq $$
$$\geq \inf_{x \in X} \left( \frac{1}{m_{\mathcal{U}_k} (x)}\sum_{U \in \mathcal{U}_k, d(x,X \setminus U) \geq T_k} g(d(x,X \setminus U)) \right) \geq g(T_k).$$
Since $\lim g (T_k) = \infty$ we get that
$$\lim_{k \rightarrow \infty} \inf_{x \in X} \left( \frac{1}{m_{\mathcal{U}_k} (x)}\sum_{U \in \mathcal{U}_k, x \in U} (1+\frac{1}{k})^{d(x,X \setminus U)} \right) = \infty,$$
so we are done by $(5)$ in theorem \ref{definitions}.

\end{proof}

\section{Connection with asymptotic dimension growth}

Recall that given a metric space $(X,d)$ and a cover $\mathcal{U}$ of $X$, $\lambda >0$ is called a Lebesgue number of $\mathcal{U}$ if for every $A \subset X$ with $diam (A) \leq \lambda$ there is $U \in \mathcal{U}$ such that $A \subset U$. Denote by $L (\mathcal{U})$ the largest Lebesgue number of $\mathcal{U}$. Denote further the multiplicity of $\mathcal{U}$ as
$$m (\mathcal{U} ) = \sup_{x \in X} m (x).$$
In \cite{Dr} the asymptotic dimension function was defined as:
$$ad_X (\lambda ) = \min \lbrace m (\mathcal{U}) : \mathcal{U} \text{ is a uniformally bounded cover of } X \text{ such that } L(\mathcal{U}) \geq \lambda \rbrace-1.$$
A space $X$ has subexponential dimension growth if $ad_X (\lambda )$ grows subexponentially.
\begin{proposition}
If $X$ has subexponential dimension growth, then $X$ has asymptotically large depth. 
\end{proposition}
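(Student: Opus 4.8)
The plan is to verify condition $(6)$ of Theorem~\ref{definitions}, which is the most economical of the equivalent formulations for this purpose: it suffices to produce, for every $c>1$ and every $k>0$, one uniformly bounded and locally finite cover $\mathcal{U}(c,k)$ with $\inf_{x\in X}\frac{1}{m_{\mathcal{U}(c,k)}(x)}\sum_{U\in\mathcal{U}(c,k),\,x\in U}c^{d(x,X\setminus U)}\ge k$.

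First I would set up the supply of covers coming from the hypothesis. Since $ad_X$ grows subexponentially it is finite for all sufficiently large $\lambda$, and being a $\min$ over natural numbers it is attained: for each such $\lambda$ there is a uniformly bounded cover $\mathcal{W}_\lambda$ with $L(\mathcal{W}_\lambda)\ge\lambda$ and $m(\mathcal{W}_\lambda)=ad_X(\lambda)+1<\infty$, and finite multiplicity forces $\mathcal{W}_\lambda$ to be locally finite, so requirement $(a)$ is automatic. I would also note that $\lambda\mapsto ad_X(\lambda)+1$ is again monotone and of subexponential growth, hence $\lim_{\lambda\to\infty} b^\lambda/(ad_X(\lambda)+1)=\infty$ for every $b>1$.

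The one genuinely geometric ingredient is the standard Lebesgue-number estimate: since $L(\mathcal{W}_\lambda)\ge\lambda$, for every $x\in X$ the closed ball $\bar B(x,\lambda/2)$ has diameter at most $\lambda$ and hence lies inside some $U\in\mathcal{W}_\lambda$; for that $U$ we have $x\in U$ and $d(x,X\setminus U)\ge\lambda/2$. Keeping only this distinguished term of the sum and bounding $m_{\mathcal{W}_\lambda}(x)\le m(\mathcal{W}_\lambda)$ gives, uniformly in $x$ and for any $c>1$,
$$\frac{1}{m_{\mathcal{W}_\lambda}(x)}\sum_{U\in\mathcal{W}_\lambda,\,x\in U}c^{d(x,X\setminus U)}\ \ge\ \frac{c^{\lambda/2}}{ad_X(\lambda)+1}.$$
Now, given $c>1$ and $k>0$, apply the observation above with $b=c^{1/2}>1$: the right-hand side tends to $\infty$ as $\lambda\to\infty$, so I can fix $\lambda=\lambda(c,k)$ large (in particular with $ad_X(\lambda)<\infty$) making it $\ge k$, and put $\mathcal{U}(c,k):=\mathcal{W}_\lambda$. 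This cover satisfies $(a)$ and $(b)$ of condition $(6)$, so Theorem~\ref{definitions} yields that $X$ has asymptotically large depth.

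I do not anticipate a real obstacle: once one commits to checking formulation $(6)$ instead of the original definition, the argument is short. The only places that call for a little care are ensuring $ad_X(\lambda)$ is finite so that a cover realizing the minimal multiplicity genuinely exists, and the clause ``every point is $\lambda/2$-deep in some member of $\mathcal{W}_\lambda$'' — one may, if desired, replace $\lambda/2$ by $\lambda/2-\varepsilon$, or choose $\lambda$ away from the jump points of the step function $ad_X$, to sidestep whether the supremal Lebesgue number is attained; this only weakens $c^{\lambda/2}$ by a bounded factor and is therefore immaterial.
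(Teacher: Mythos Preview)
Your proof is correct and shares the same geometric core as the paper's: pick a uniformly bounded cover realizing $m(\mathcal{U})=ad_X(\lambda)+1$ with $L(\mathcal{U})\ge\lambda$, use the Lebesgue number to guarantee each $x$ lies $\lambda/2$-deep in some member, and then let that single term outpace the multiplicity via subexponential growth. The only difference is in packaging: you verify condition~$(6)$ of Theorem~\ref{definitions} directly, obtaining $\frac{c^{\lambda/2}}{ad_X(\lambda)+1}\to\infty$ for each fixed $c>1$; the paper instead verifies the characterization of Theorem~\ref{subexp function} by building the tailored subexponential function $g(t)=t\,(ad_X(2t)+1)$, so that $\frac{g(k/2)}{ad_X(k)+1}=\frac{k}{2}$ comes out cleanly. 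Your route is marginally more economical in that it avoids constructing $g$; the paper's route makes the growth comparison explicit in a single function. Neither approach requires anything the other does not.
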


\begin{proof}
We assume without loss of generality that $X$ is unbounded. We'll use the definition given in theorem \ref{subexp function} to prove that $X$ has asymptotically large depth. \\
Take $g (t) = t(ad_X ( 2 t)+1)$, since $X$ has subexponential dimension growth we get that $g (t)$ is monotone increasing and grows subexponentially. For every $k \in \mathbb{N}$ let $\mathcal{U}_k$ be a uniformly bounded cover of $X$ such that 
$L (\mathcal{U}_k ) \geq k$ and $m(\mathcal{U}_k) = ad_X (k) +1$. 
Notice that $L(\mathcal{U}_k) \geq k$ implies that for every $x \in X$ there is $U \in \mathcal{U}_k$ such that $d(x, X\setminus U) \geq \frac{k}{2}$. This implies that for every $x \in X$ we have:
$$ \frac{1}{m_{\mathcal{U}_k} (x)}\sum_{U \in \mathcal{U}_k, x \in U} g(d(x,X \setminus U))  \geq \frac{1}{m (\mathcal{U}_k)} g (\frac{k}{2} ) = \frac{ad_X (k) + 1}{m (\mathcal{U}_k)} \dfrac{k}{2}  = \dfrac{k}{2}.$$
Therefore
$$\lim_{k \rightarrow \infty} \inf_{x \in X} \left( \frac{1}{m_{\mathcal{U}_k} (x)}\sum_{U \in \mathcal{U}_k, x \in U} g(d(x,X \setminus U)) \right) =\infty,$$
and we are done.
\end{proof}

\begin{corollary}
Let $X$ be a discrete metric space of bounded geometry. If $X$ has sub exponential dimension growth, then $X$ has property A. 
\end{corollary}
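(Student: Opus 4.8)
The plan is to deduce this corollary directly by composing two results already established, with essentially no new work. First I would apply the preceding Proposition: since $X$ has subexponential dimension growth, $X$ has asymptotically large depth. Then I would apply Theorem \ref{property A}: since $X$ is a discrete metric space of bounded geometry that has asymptotically large depth, $X$ has property A. Concatenating these two implications yields the statement.

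The only point that needs a moment's attention is that the hypotheses of the two inputs line up. The Proposition is stated for an arbitrary metric space $X$ (and in its proof the bounded case is reduced away at the start), whereas Theorem \ref{property A} requires $X$ to be discrete with bounded geometry. Since the corollary assumes exactly that $X$ is discrete of bounded geometry, both results apply verbatim, so there is no gap to fill. One should also note that the conclusion ``asymptotically large depth'' produced by the Proposition is literally the same notion fed into Theorem \ref{property A} (the Proposition in fact uses the characterization of Theorem \ref{subexp function}, which is equivalent to the original definition).

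I do not expect any genuine obstacle here: all of the mathematical content lives in the Proposition and in Theorem \ref{property A}, and this corollary is a bookkeeping consequence of stringing them together. If one wanted, one could also record that this reproves the known fact (from \cite{Dr} for polynomial growth and \cite{Oz} for subexponential growth) that subexponential dimension growth implies property A, now factored through the intermediate notion of asymptotically large depth.

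\begin{proof}
Combine the previous Proposition with Theorem \ref{property A}: if $X$ has subexponential dimension growth then, by the Proposition, $X$ has asymptotically large depth; and since $X$ is a discrete metric space of bounded geometry, Theorem \ref{property A} then shows that $X$ has property A.
\end{proof}
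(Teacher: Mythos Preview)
Your proposal is correct and matches the paper's own proof essentially verbatim: the paper simply writes ``Combine the above proposition with theorem \ref{property A}.'' There is nothing to add.
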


\begin{proof}
Combine the above proposition with theorem \ref{property A}.
\end{proof}

\begin{remark}
The above corollary was already proven in \cite{Oz} using a different averaging method.
\end{remark}

\section{Invariance under Quasi-isometry}

First let us recall the following definitions and facts about coarse maps and quasi-isometries.

\begin{definition}
Let $\phi : X \rightarrow Y$ a map of metric space. \\
$\phi$ is called \textit{effectively proper}, if for every $R>0$ there is $S (R) >0$ such that for every $x \in X$, $\phi^{-1} (B (f(x) , R)) \subseteq B (x, S (R) )$. \\
$\phi$ is called \textit{bornologous} if there is a map $\rho : \mathbb{R}_{\geq 0} \rightarrow  \mathbb{R}_{\geq 0}$ such that for every $x, x' \in X$ the following holds:
$$ d_Y (\phi (x), \phi (x') ) \leq \rho (d_X (x, x')).$$
$\phi$ is called a \textit{coarse embedding} if it is both effectively proper and bornologous. \\
$\phi$ is called \textit{large scale Lipschitz} if there are constants $A \geq 1, C \geq 0$ such that for every $x, x' \in X$ the following holds:
$$ d_Y (\phi (x), \phi (x') ) \leq A d_X (x, x') +C.$$ 
$\phi$ is called a \textit{quasi-isometric embedding} if there are constants $A \geq 1, C \geq 0$ such that for every $x, x' \in X$ the following holds:
$$\frac{1}{A} d_X (x,x') - C \leq d_Y (\phi (x),\phi (x')) \leq A d_X (x,x') + C .$$
$X$ and $Y$ are said to be \textit{quasi-isometric} if there are quasi-isometric embeddings $\phi :X \rightarrow Y, \overline{\phi} : Y \rightarrow X$ such that
$$ \sup_{x \in X} d_X (x, \overline{\phi} (\phi (x))) < \infty,  \sup_{y \in Y} d_Y (y,  \phi (\overline{\phi} (y))) < \infty .$$
\end{definition}

\begin{lemma}
\label{length spaces}
Let $X$ be a length space and $Y$ be any metric space. Then $\phi : X \rightarrow Y$ is bornologous if and only if it is large scale Lipschitz.
\end{lemma}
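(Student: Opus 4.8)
The plan is to prove both implications directly from the definitions. One direction is essentially trivial: if $\phi$ is large scale Lipschitz with constants $A \geq 1$, $C \geq 0$, then setting $\rho(t) = At + C$ immediately witnesses bornologousness, and this uses nothing about $X$ being a length space. So the content is entirely in the converse.

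For the converse, suppose $\phi$ is bornologous with control function $\rho : \mathbb{R}_{\geq 0} \to \mathbb{R}_{\geq 0}$; without loss of generality $\rho$ is monotone nondecreasing (replace $\rho(t)$ by $\sup_{s \leq t}\rho(s)$, which is finite for each $t$). The key idea is to exploit the length space structure to chain short hops. Fix $x, x' \in X$ and let $\ell = d_X(x,x')$. Since $X$ is a length space, for any $\delta > 0$ there is a path from $x$ to $x'$ of length at most $\ell + \delta$; I would subdivide this path into $N = \lceil \ell \rceil$ (or $\lceil \ell + \delta \rceil$) consecutive points $x = z_0, z_1, \dots, z_N = x'$ with $d_X(z_{i-1}, z_i) \leq 1 + \delta/N \leq 2$ for all $i$ (for $\delta$ small). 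Applying bornologousness to each consecutive pair and the triangle inequality in $Y$ gives
$$ d_Y(\phi(x), \phi(x')) \leq \sum_{i=1}^{N} d_Y(\phi(z_{i-1}), \phi(z_i)) \leq N \rho(2) \leq (\ell + 1)\rho(2) = \rho(2)\, d_X(x,x') + \rho(2),$$
which is exactly the large scale Lipschitz estimate with $A = \max\{\rho(2), 1\}$ and $C = \rho(2)$.

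The main technical obstacle is the subdivision step: strictly speaking, a length space need not admit geodesics, only near-geodesic paths, so I must work with a path $\gamma : [0, L] \to X$ of length $L \leq \ell + \delta$ parametrized by arc length and choose the subdivision points as $z_i = \gamma(iL/N)$, checking that $d_X(z_{i-1}, z_i) \leq L/N$ (arc length bounds distance) and that $L/N$ can be made $\leq 2$. Care is needed when $\ell$ is small (e.g. $x = x'$), but then one can take the trivial path or absorb it into the additive constant; and one should handle the case $\ell = 0$ separately since there $d_Y(\phi(x),\phi(x')) = 0 \leq C$ trivially. Once the subdivision is set up cleanly, the chaining inequality and the final bookkeeping of constants are routine.
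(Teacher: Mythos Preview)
Your argument is correct and is exactly the standard chaining proof: the paper itself does not give a proof but refers to Lemma~1.10 of Roe's \emph{Lectures on Coarse Geometry}, and your subdivision-of-a-near-geodesic argument is precisely that lemma's proof. The bookkeeping with $N=\lceil \ell\rceil$, the bound $d_X(z_{i-1},z_i)\le L/N\le 2$, and the resulting constants $A=\max\{\rho(2),1\}$, $C=\rho(2)$ are all fine.
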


\begin{proof} 
See lemma 1.10 in \cite{Roe}.
\end{proof}

\begin{lemma}[Milnor-Schwarz lemma]
\label{MS lemma}
Let $X$ be a proper geodesic metric space and let $\Gamma$ be group acting by isometries on $X$ such that the action is cocompact and proper, then $\Gamma$ is finitely generated and quasi-isometric (with respect to the word metric) to $X$.
\end{lemma}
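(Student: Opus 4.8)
The plan is to run the classical orbit-map argument. Fix a basepoint $x_0 \in X$. Since the $\Gamma$-action is cocompact there is a compact set $K \subseteq X$ with $\Gamma \cdot K = X$, and since $X$ is proper we may enlarge $K$ to a closed ball $\overline{B}(x_0, R)$ with $R$ large enough that still $\Gamma \cdot \overline{B}(x_0, R) = X$. The candidate finite generating set will be
$$ S = \left\lbrace \gamma \in \Gamma : \gamma \neq e, \ d(x_0, \gamma x_0) \leq 3R \right\rbrace. $$
Since $X$ is proper, $\overline{B}(x_0, 3R)$ is compact, and for $\gamma \in S$ we have $\gamma x_0 \in \gamma\overline{B}(x_0,3R) \cap \overline{B}(x_0,3R)$; hence properness of the action (for every compact $K'$, only finitely many $\gamma$ satisfy $\gamma K' \cap K' \neq \emptyset$) forces $S$ to be finite.

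Next I would show that $S$ generates $\Gamma$, together with a linear word-length bound. Given $\gamma \in \Gamma$, pick a geodesic segment from $x_0$ to $\gamma x_0$ (this is where geodesicity of $X$ is used) and choose points $x_0 = p_0, p_1, \dots, p_n = \gamma x_0$ along it with $d(p_{i-1}, p_i) \leq R$ and $n \leq \lceil d(x_0, \gamma x_0)/R \rceil$. For each $i$ pick $\gamma_i \in \Gamma$ with $d(p_i, \gamma_i x_0) \leq R$, taking $\gamma_0 = e$ and $\gamma_n = \gamma$. Then, using that $\Gamma$ acts by isometries,
$$ d(x_0, \gamma_{i-1}^{-1}\gamma_i x_0) = d(\gamma_{i-1} x_0, \gamma_i x_0) \leq d(\gamma_{i-1} x_0, p_{i-1}) + d(p_{i-1}, p_i) + d(p_i, \gamma_i x_0) \leq 3R, $$
so each $s_i := \gamma_{i-1}^{-1}\gamma_i$ lies in $S \cup \lbrace e \rbrace$, and $\gamma = s_1 s_2 \cdots s_n$ because $\gamma_0 = e$. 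Hence $\Gamma = \langle S \rangle$ is finitely generated and, writing $\vert \cdot \vert_S$ for the word metric, $\vert \gamma \vert_S \leq n \leq \frac{1}{R} d(x_0, \gamma x_0) + 1$. For the reverse estimate, if $\gamma = s_1 \cdots s_m$ with $s_i \in S$ and $m = \vert \gamma \vert_S$, then
$$ d(x_0, \gamma x_0) \leq \sum_{i=1}^m d(s_1\cdots s_{i-1} x_0, s_1 \cdots s_i x_0) = \sum_{i=1}^m d(x_0, s_i x_0) \leq 3R \, \vert \gamma \vert_S. $$
Combining the two estimates shows the orbit map $\Gamma \to X$, $\gamma \mapsto \gamma x_0$, is a quasi-isometric embedding; and $\Gamma \cdot \overline{B}(x_0,R) = X$ means every point of $X$ lies within $R$ of the orbit, so this map is coarsely surjective, and a coarse inverse is obtained by sending $y \in X$ to any $\gamma$ with $d(y,\gamma x_0) \leq R$. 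This exhibits $\Gamma$ and $X$ as quasi-isometric.

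The triangle-inequality manipulations and constant-chasing are routine. The step deserving the most care is extracting the finite set $S$ from the hypotheses: one must invoke properness of $X$ to know $\overline{B}(x_0,3R)$ is compact, and only then may properness of the action be applied to conclude $S$ is finite. The only other place where the hypotheses are genuinely used is the geodesic subdivision, which requires merely that $X$ be a geodesic space.
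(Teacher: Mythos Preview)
Your argument is the standard orbit-map proof of the Milnor--Schwarz lemma and is correct as written; the constant-chasing and the use of properness (of both $X$ and the action) are handled carefully. The paper does not give its own proof of this lemma at all --- it simply cites \cite{GH}[proposition 10.9] --- so there is nothing to compare beyond noting that you have supplied in full the classical argument the paper defers to a reference.
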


\begin{proof}
See \cite{GH}[proposition 10.9].
\end{proof}

Next, we'll show the following:

\begin{proposition}
Let $X,Y$ metric spaces and $\phi: X \rightarrow Y$ that is effectively proper and large scale Lipschitz. If $Y$ has asymptotically large depth then $X$ has asymptotically large depth.
\end{proposition}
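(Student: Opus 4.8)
The plan is to pull back a witnessing sequence of covers for $Y$ to a witnessing sequence for $X$ via $\phi$, and to check that the defining inequality (in the form of part $(6)$ of Theorem \ref{definitions}, which I will target) survives the pullback with a suitably adjusted base. Fix constants $A \geq 1$, $C \geq 0$ for the large scale Lipschitz condition $d_Y(\phi(x),\phi(x')) \leq A\,d_X(x,x') + C$, and a function $S(R)$ for effective properness. Given $c>1$ and $k>0$ I must produce a cover $\mathcal{V}(c,k)$ of $X$ that is uniformly bounded, locally finite, and satisfies $\inf_{x}\frac{1}{m_{\mathcal{V}(c,k)}(x)}\sum_{V\ni x} c^{d_X(x,X\setminus V)} \geq k$. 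I would apply part $(6)$ for $Y$ with the base $c' := c^{1/A}$ (still $>1$) and the same threshold $k$, getting a cover $\mathcal{U} = \mathcal{U}(c',k)$ of $Y$, and then set $\mathcal{V}(c,k) := \{\phi^{-1}(U) : U \in \mathcal{U}\}$ (discarding empty sets). This is a cover of $X$ since $\mathcal{U}$ covers $Y$.

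The three properties to verify are the following. First, \emph{uniform boundedness}: if $\mathrm{diam}(U) \leq S_{\mathcal{U}}$ then for $x,x' \in \phi^{-1}(U)$ we have $d_Y(\phi(x),\phi(x')) \leq S_{\mathcal{U}}$, so by effective properness applied around $\phi(x)$, $x'$ lies in $B(x, S(S_{\mathcal{U}}))$; hence $\mathrm{diam}(\phi^{-1}(U)) \leq 2S(S_{\mathcal{U}})$, a uniform bound. Second, \emph{local finiteness and multiplicity control}: since $\phi(x) \in U$ iff $x \in \phi^{-1}(U)$, the map $U \mapsto \phi^{-1}(U)$ sends the sets of $\mathcal{U}$ containing $\phi(x)$ onto the sets of $\mathcal{V}$ containing $x$; this is surjective, and in fact I want it to be a bijection onto the nonempty preimages so that $m_{\mathcal{V}}(x) = m_{\mathcal{U}}(\phi(x))$ — but distinct $U, U'$ could have equal nonempty preimages, which would only \emph{decrease} $m_{\mathcal{V}}(x)$ relative to $m_{\mathcal{U}}(\phi(x))$. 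To avoid this bookkeeping subtlety I would instead index $\mathcal{V}$ by $\mathcal{U}$ itself (i.e. treat $\mathcal{V}$ as a family indexed by $\mathcal{U}$, keeping multiplicities), so that $m_{\mathcal{V}}(x) = m_{\mathcal{U}}(\phi(x))$ exactly; in particular $\mathcal{V}$ is locally finite. Third, the \emph{key distance estimate}: I claim $d_X(x, X\setminus \phi^{-1}(U)) \geq \frac{1}{A}\big(d_Y(\phi(x), Y\setminus U) - C\big)$. Indeed if $x' \in X \setminus \phi^{-1}(U)$ then $\phi(x') \in Y \setminus U$, so $d_Y(\phi(x),\phi(x')) \geq d_Y(\phi(x), Y\setminus U)$; combined with $d_Y(\phi(x),\phi(x')) \leq A\,d_X(x,x') + C$ this gives $d_X(x,x') \geq \frac{1}{A}(d_Y(\phi(x),Y\setminus U) - C)$, and taking the infimum over such $x'$ yields the claim.

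Putting these together, with $t := d_Y(\phi(x), Y\setminus U)$ we get $c^{d_X(x,X\setminus\phi^{-1}(U))} \geq c^{\frac{1}{A}(t - C)} = c^{-C/A}\,(c^{1/A})^{t} = c^{-C/A}\,(c')^{t}$, uniformly over all sets $U$ with $\phi(x) \in U$. Hence
$$\frac{1}{m_{\mathcal{V}}(x)}\sum_{V \in \mathcal{V},\, x \in V} c^{d_X(x,X\setminus V)} \;\geq\; c^{-C/A}\cdot \frac{1}{m_{\mathcal{U}}(\phi(x))}\sum_{U \in \mathcal{U},\, \phi(x) \in U} (c')^{d_Y(\phi(x),Y\setminus U)} \;\geq\; c^{-C/A} \cdot k',$$
where $k'$ is the threshold I chose for $Y$. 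Since $c^{-C/A}$ is a fixed positive constant depending only on $c, C, A$ (not on $k'$), I should have applied part $(6)$ for $Y$ with threshold $k' := k\, c^{C/A}$ from the start; then the right-hand side is $\geq k$, as required. Taking the infimum over $x \in X$ finishes the verification of part $(6)$ for $X$, which by Theorem \ref{definitions} is equivalent to $X$ having asymptotically large depth.

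I expect the main obstacle to be purely organizational rather than deep: getting the multiplicity bookkeeping exactly right (the observation that one should carry $\mathcal{V}$ as an $\mathcal{U}$-indexed family, or else argue that collapsing preimages only helps), and remembering to absorb the additive constant $C$ by pre-inflating the target threshold $k'$ and pre-deflating the base to $c^{1/A}$. One edge case worth a sentence: if some $\phi^{-1}(U) = \emptyset$ it contributes nothing and can be dropped; and the convention $d(x, \emptyset) = \infty$ from the earlier remark handles any $U$ with $Y \setminus U = \emptyset$ consistently on both sides. Everything else is a direct substitution using the two hypotheses on $\phi$.
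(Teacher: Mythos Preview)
Your proof is correct and follows essentially the same approach as the paper: pull back the covers via $\phi^{-1}$, use effective properness for uniform boundedness, bound multiplicities by $m_{\mathcal{U}}(\phi(x))$, and derive the same key distance estimate $d_X(x,X\setminus\phi^{-1}(U))\geq \frac{1}{A}(d_Y(\phi(x),Y\setminus U)-C)$ from the large scale Lipschitz condition. The only difference is that the paper targets the subexponential-function characterization of Theorem~\ref{subexp function} (defining $g_X(t)=g_Y(At+C)$ and checking it is subexponential), whereas you target condition~(6) of Theorem~\ref{definitions} and absorb the affine shift by adjusting the base to $c^{1/A}$ and the threshold to $k\,c^{C/A}$; your explicit handling of the indexing/multiplicity issue is also more careful than the paper's.
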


\begin{proof}
Let $A,C$ be the constants of the $\phi$ and is the definition of large scale Lipschitz. For $\mathcal{U}$ that is a cover of $Y$, define
$$\phi^{-1} (\mathcal{U}) = \lbrace \phi^{-1} (U) : U \in \mathcal{U} \rbrace .$$
Note that $\phi^{-1} (\mathcal{U})$ is a cover of $X$ and if $R$ is a bound on the diameter of sets in $\mathcal{U}$, then $S (R)$ is a bound on the diameter of the sets in $\phi^{-1} (\mathcal{U})$ and in particular, if $\mathcal{U}$ is uniformly bounded then $\phi^{-1} (\mathcal{U})$ is uniformly bounded. Also, note that for every $x \in X$ we have $m_{\phi^{-1} (\mathcal{U})} (x) \leq m_{\mathcal{U}} (\phi (x))$. Last, note that if $x \in \phi^{-1} (U), x' \notin \phi^{-1} (U)$ ,then $\phi (x) \in U, \phi (x') \notin U$ and therefore
$$d_Y (\phi (x), Y \setminus U) \leq d_Y ( \phi (x), \phi (x')) \leq A d_X (x,x') +C.$$
Since this is true for every $x \in \phi^{-1} (U), x' \notin \phi^{-1} (U)$, this yields that for every $x \in \phi^{-1} (U)$, we have that
$$\dfrac{d_Y (\phi (x), Y \setminus U) - C}{A} \leq d_X (x , X \setminus \phi^{-1} (U)) .$$
After all those observations, we are ready to prove the proposition using the definition given in theorem \ref{subexp function}: $Y$ has asymptotically large depth if and only if there is a monotone increasing function $g_Y : \mathbb{R}_{\geq 0} \rightarrow  \mathbb{R}_{+}$ with subexponential growth and a sequence of covers $\lbrace \mathcal{U}_k\rbrace_{k \in \mathbb{N}}$ of $Y$ such that:
\begin{enumerate}
\item  For each $k$, $\mathcal{U}_k$ is uniformly bounded and locally finite.
\item 
$$\lim_{k \rightarrow \infty} \inf_{y \in Y} \left( \frac{1}{m_{\mathcal{U}_k} (y)}\sum_{U \in \mathcal{U}_k, y \in U} g_Y (d_Y (y,Y \setminus U)) \right) =\infty.$$
\end{enumerate}
Denote now $g_X (t) = g_Y (At+C)$. $g_X$ is monotone increasing as a composition of monotone increasing functions and for every $b>1$, we have
$$ \lim_{ t \rightarrow \infty} \dfrac{g_X (t)}{b^t} = \lim_{ t \rightarrow \infty} \dfrac{g_Y (At+C)}{b^t} = \lim_{ t \rightarrow \infty} b^{\frac{C}{A}} \left( \dfrac{g_Y ( At+C)}{(b^\frac{1}{A})^{At+C}} \right) = 0 ,$$
and therefore $g_X (t)$ has subexponential growth. To finish, notice that by the above observations, $\phi^{-1} (\mathcal{U}_k)$ is uniformly bounded and locally finite for each $k$ and 
$$  \inf_{x \in X} \left( \frac{1}{m_{\phi^{-1} (\mathcal{U}_k)} (x)} \sum_{\phi^{-1} (U) \in \phi^{-1} (\mathcal{U}_k), x \in \phi^{-1} (U)} g_X (d_X (x,X \setminus \phi^{-1} (U))) \right) \geq$$
$$\geq  \inf_{x \in X} \left( \frac{1}{m_{\mathcal{U}_k} (\phi (x))}\sum_{U \in \mathcal{U}_k, \phi (x) \in U} g_X (\dfrac{d_Y (\phi (x), Y \setminus U) - C}{A}) \right) \geq$$
$$\geq \inf_{x \in X} \left( \frac{1}{m_{\mathcal{U}_k} (\phi (x))}\sum_{U \in \mathcal{U}_k, \phi (x) \in U} g_Y (d_Y (\phi (x), Y \setminus U)) \right) \geq$$
$$\geq \inf_{y \in Y} \left( \frac{1}{m_{\mathcal{U}_k} (y)}\sum_{U \in \mathcal{U}_k, y \in U} g_Y (d_Y (y,Y \setminus U)) \right) .$$
Therefore
$$\lim_{k \rightarrow \infty}  \inf_{x \in X} \left( \frac{1}{m_{\phi^{-1} (\mathcal{U}_k)} (x)} \sum_{\phi^{-1} (U) \in \phi^{-1} (\mathcal{U}_k), x \in \phi^{-1} (U)} g_X (d_X (x,X \setminus \phi^{-1} (U))) \right) = \infty, $$
and we are done.
\end{proof}

The above proposition has the following corollaries:

\begin{corollary}
Let $X,Y$ metric spaces and $\phi: X \rightarrow Y$ a quasi-isometric embedding. If $Y$ has asymptotically large depth then $X$ has asymptotically large depth.
\end{corollary}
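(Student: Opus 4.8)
The plan is to reduce this corollary immediately to the preceding proposition by observing that a quasi-isometric embedding is a special case of a map that is both effectively proper and large scale Lipschitz. So the only work is to unpack the definition of quasi-isometric embedding into those two properties, after which the proposition does everything.

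First I would write down the defining inequality of a quasi-isometric embedding: there are constants $A \geq 1$ and $C \geq 0$ such that for all $x, x' \in X$,
$$\frac{1}{A} d_X (x,x') - C \leq d_Y (\phi (x), \phi (x')) \leq A d_X (x,x') + C .$$
The right-hand inequality is verbatim the definition of large scale Lipschitz, with the same constants $A, C$, so that property is free. For effective properness, I would fix $R > 0$ and propose $S(R) = A(R + C)$: if $x' \in \phi^{-1}(B(\phi(x), R))$ then $d_Y(\phi(x), \phi(x')) \leq R$, and feeding this into the left-hand inequality gives $\frac{1}{A} d_X(x,x') - C \leq R$, hence $d_X(x,x') \leq A(R+C) = S(R)$; since this bound is uniform in $x$, we get $\phi^{-1}(B(\phi(x), R)) \subseteq B(x, S(R))$ for every $x$, which is exactly effective properness.

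With both properties established, I would then simply invoke the previous proposition: since $Y$ has asymptotically large depth and $\phi \colon X \to Y$ is effectively proper and large scale Lipschitz, $X$ has asymptotically large depth. There is essentially no obstacle here; the only thing that requires a little care is choosing the function $S(R)$ so that it genuinely works uniformly over all basepoints $x$, but this is immediate because the constants $A$ and $C$ in the quasi-isometric embedding inequality do not depend on the points. One could also phrase the whole corollary as: every quasi-isometric embedding is a coarse embedding that is moreover large scale Lipschitz, and then cite the proposition; I would keep the direct two-line verification of the constants since it makes the bookkeeping transparent.
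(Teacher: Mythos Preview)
Your proposal is correct and is exactly the intended argument: the paper's proof reads simply ``Obvious,'' and what you have written is the natural unpacking of that word, namely that the two-sided quasi-isometry inequality immediately yields both large scale Lipschitz (right-hand side) and effective properness (left-hand side, with $S(R)=A(R+C)$), after which the preceding proposition applies.
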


\begin{proof}
Obvious.
\end{proof}

From this corollary we get:

\begin{corollary}
Asymptotically large depth is a quasi-isometry invariant, i.e., if $X$ and $Y$ are quasi-isometric then $X$ has asymptotically large depth if and only if $Y$ has asymptotically large depth.
\end{corollary}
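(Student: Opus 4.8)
The plan is to deduce this corollary directly from the preceding one about quasi-isometric embeddings, so no genuinely new work is required. Recall that by definition, if $X$ and $Y$ are quasi-isometric, then in particular there exists a quasi-isometric embedding $\phi : X \to Y$ (and also one $\overline{\phi} : Y \to X$). So the setup of the previous corollary is satisfied in both directions.

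First I would invoke the previous corollary with the quasi-isometric embedding $\phi : X \to Y$: since $Y$ has asymptotically large depth, it follows that $X$ has asymptotically large depth. Then I would invoke the previous corollary again, this time with the quasi-isometric embedding $\overline{\phi} : Y \to X$: since $X$ has asymptotically large depth, it follows that $Y$ has asymptotically large depth. Putting the two implications together gives the biconditional: $X$ has asymptotically large depth if and only if $Y$ does.

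The only point worth noting, which is why the previous corollary was phrased as it is, is that a quasi-isometry is \emph{not} a single map but a pair of quasi-isometric embeddings going in opposite directions (together with the bounded-displacement conditions on the compositions). The bounded-displacement conditions are not even needed here — all that matters is the existence of a quasi-isometric embedding in each direction, and asymptotically large depth passing to domains of quasi-isometric embeddings (which in turn rests on the underlying Proposition about effectively proper, large scale Lipschitz maps, since every quasi-isometric embedding is both). There is no real obstacle; the statement is immediate from the two applications of the previous corollary.
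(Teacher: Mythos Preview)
Your proposal is correct and matches the paper's own approach: the paper simply states that this corollary follows from the preceding one about quasi-isometric embeddings, and your argument spells out exactly that, applying it once in each direction using the two quasi-isometric embeddings $\phi$ and $\overline{\phi}$.
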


Combine the above corollary with lemma \ref{MS lemma} to get:
\begin{corollary}
Let $X$ be a proper geodesic metric space and let $\Gamma$ be group acting by isometries on $X$ such that the action is cocompact and proper, then $\Gamma$ has asymptotically large depth if and only if $X$ has asymptotically large depth.
\end{corollary}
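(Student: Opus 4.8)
The plan is to deduce the statement directly from the two results immediately preceding it: the Milnor--Schwarz lemma (Lemma \ref{MS lemma}) and the quasi-isometry invariance of asymptotically large depth (the corollary just above). No genuinely new argument is needed; the only point requiring a word of care is that ``$\Gamma$ has asymptotically large depth'' presupposes a choice of metric on $\Gamma$, and the relevant metric is the word metric with respect to a finite generating set.

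First I would invoke Lemma \ref{MS lemma}: since $X$ is a proper geodesic metric space and $\Gamma$ acts on $X$ by isometries with the action cocompact and proper, the lemma guarantees that $\Gamma$ is finitely generated and that $\Gamma$, equipped with the word metric relative to some (equivalently, any) finite generating set, is quasi-isometric to $X$. This both makes sense of the phrase ``$\Gamma$ has asymptotically large depth'' and produces the quasi-isometry that the next step consumes.

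Next I would apply the corollary stating that asymptotically large depth is a quasi-isometry invariant: two quasi-isometric metric spaces either both have asymptotically large depth or both fail to. Combining this with the quasi-isometry $\Gamma \sim X$ from the previous step yields that $\Gamma$ has asymptotically large depth if and only if $X$ does, which is exactly the claim. (Note that the invariance corollary is stated for arbitrary metric spaces, with no bounded-geometry hypothesis, so it applies equally to $X$ and to $\Gamma$.)

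There is no real obstacle here; the entire content of the statement is packaged in Lemma \ref{MS lemma} and the invariance corollary, and the proof is the one-line observation that they compose. If anything, the only thing worth spelling out explicitly is the independence of the conclusion from the choice of finite generating set, which itself follows because different finite generating sets give quasi-isometric word metrics and we again invoke the invariance corollary.

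\begin{proof}
By Lemma \ref{MS lemma}, $\Gamma$ is finitely generated and, with respect to the word metric associated to a finite generating set, quasi-isometric to $X$. By the previous corollary, asymptotically large depth is a quasi-isometry invariant, so $\Gamma$ has asymptotically large depth if and only if $X$ does.
\end{proof}
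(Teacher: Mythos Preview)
Your proof is correct and follows exactly the paper's approach: the paper's own proof is the one-line remark ``Combine the above corollary with lemma \ref{MS lemma},'' which is precisely what you do. Your additional remarks about the choice of finite generating set are fine but not strictly needed.
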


Also, by using lemma \ref{length spaces} and the above proposition, we get:

\begin{corollary}
Let $X,Y$ metric spaces such that $X$ is a length space and let $\phi: X \rightarrow Y$ be a coarse map. If $Y$ has asymptotically large depth, then $X$ has asymptotically large depth.
\end{corollary}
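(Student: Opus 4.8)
The plan is to obtain this as an immediate consequence of the preceding proposition together with Lemma \ref{length spaces}. First I would unwind the hypothesis: by definition a coarse map (coarse embedding) $\phi : X \to Y$ is both effectively proper and bornologous. The effective properness is already one of the two hypotheses required to apply the proposition proved just above, so the only gap to close is to promote ``bornologous'' to ``large scale Lipschitz''.

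This is precisely where the assumption that $X$ is a length space enters. Applying Lemma \ref{length spaces} with this $X$ (and $Y$ an arbitrary metric space), the map $\phi$, being bornologous, is therefore large scale Lipschitz. Hence $\phi$ is simultaneously effectively proper and large scale Lipschitz, and the hypotheses of the previous proposition are met.

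At this point I would simply invoke that proposition: since $Y$ has asymptotically large depth and $\phi : X \to Y$ is effectively proper and large scale Lipschitz, it follows that $X$ has asymptotically large depth, which is the claim. I do not expect any genuine obstacle here --- the corollary is purely the composition of Lemma \ref{length spaces} with the previous proposition, and all the quantitative content (tracking the constants $A,C$, passing from $g_Y$ to the subexponential function $g_X(t)=g_Y(At+C)$, and controlling the multiplicities and the distances $d_X(x, X\setminus \phi^{-1}(U))$) has already been carried out in the proof of that proposition. The one point worth making explicit for the reader is that ``coarse map'' is being used here in the sense of ``coarse embedding'' from the definition above, i.e.\ effectively proper together with bornologous.
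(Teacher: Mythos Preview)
Your proposal is correct and matches the paper's approach exactly: the corollary is presented in the paper without a separate proof, simply as the combination of Lemma~\ref{length spaces} with the preceding proposition. Your explicit unpacking of ``coarse map'' as effectively proper plus bornologous, and the remark that all quantitative work is already done in the proposition, is entirely appropriate.
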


\bibliographystyle{alpha}
\bibliography{bibl}

\end{document}